\documentclass[11pt]{amsart}
\usepackage{latexsym,amscd,amssymb, graphicx, color, amsthm, bm}  
\usepackage{young}
\usepackage[margin=1in]{geometry}
\usepackage{hyperref}
\usepackage[all,cmtip]{xy}

\numberwithin{equation}{section}

\newtheorem{theorem}{Theorem}[section]

\newtheorem{lemma}[theorem]{Lemma}

\theoremstyle{definition}

\newcommand{\sign}{{\mathrm {sign}}}

\newcommand{\SYT}{{\mathrm {SYT}}}

\newcommand{\symm}{{\mathfrak{S}}}

\newcommand{\CC}{{\mathbb {C}}}
\newcommand{\BBB}{{\mathcal{B}}}
\newcommand{\CCC}{{\mathcal{C}}}
\newcommand{\WWW}{{\mathcal{W}}}


\begin{document}

\title[The polytabloid basis expands positively into the web basis]
{The polytabloid basis expands positively into the web basis}

\author{Brendon Rhoades}
\address
{Department of Mathematics \newline \indent
University of California, San Diego \newline \indent
La Jolla, CA, 92093-0112, USA}
\email{bprhoades@math.ucsd.edu}

\begin{abstract}
We show that the transition matrix from the polytabloid basis to the web basis of the irreducible
$\symm_{2n}$-representation of shape $(n,n)$ 
has nonnegative integer entries. This proves a conjecture of Russell and Tymoczko \cite{RT}.
\end{abstract}

\maketitle

\section{Main}
\label{Main}

The purpose of this note is to establish a positivity relation between the 
polytabloid and web 
bases of the Catalan-dimensional
 irreducible $\symm_{2n}$-representation of shape $(n,n)$. This proves a conjecture
 of Russell and Tymoczko \cite[Conj. 5.8]{RT}.

Let $V$ be a complex vector space with $\dim(V) = m < \infty$. Given two bases
$\BBB = \{ v_1, v_2, \dots, v_m \}$ and $\CCC = \{ w_1, w_2, \dots, w_m \}$ of $V$,
there exist unique $a_{ij} \in \CC$ such that 
\begin{equation}
v_i = \sum_{j = 1}^m a_{ij} w_j.
\end{equation}
Algebraic combinatorics gives many interesting examples of bases $\BBB$ and $\CCC$
where the transition matrix $A = (a_{ij})_{1 \leq i, j \leq m}$ has nonnegative real entries
and (with respect to an appropriate order on the bases) is {\em unitriangular}
(i.e. upper triangular with diagonal entries equal to 1).

Now let $V$ and $W$ be two complex vector spaces of the same dimension $m < \infty$.
Let $\BBB = \{v_1, v_2, \dots, v_m \}$ be a basis of $V$ and let 
$\CCC = \{w_1, w_2, \dots, w_m \}$ be a basis of $W$.  Assume further that a finite
group $G$ acts irreducibly on both $V$ and $W$, and that 
$V$ and $W$ are isomorphic $G$-modules. Let $\varphi: V \rightarrow W$
be a $G$-module isomorphism; by Schur's Lemma, $\varphi$ is unique up to 
a scalar.
The set $\varphi(\BBB) = \{ \varphi(v_1), \varphi(v_2), \dots, \varphi(v_m) \}$ is 
a basis of $W$, and we can again consider the transition matrix $A = (a_{ij})_{1 \leq i, j \leq m}$
defined by
\begin{equation}
\varphi(v_i) = \sum_{j = 1}^m a_{ij} w_j.
\end{equation}
The matrix $A$ is uniquely determined up to a nonzero scalar, and we can again
ask whether its entries are nonnegative real numbers (up to an appropriate scaling)
and whether $A$ is unitriangular (with respect to an appropriate order on the bases, after
scaling).

Recall that the irreducible modules $S^{\lambda}$ for the symmetric group $\symm_m$
are naturally labeled by partitions $\lambda \vdash m$.
We consider the case where $V = V_n$ and $W = W_n$ are two different models for the 
irreducible representation $S^{(n,n)}$ of the symmetric group $\symm_{2n}$ on $2n$ letters
corresponding to the partition $(n,n) \vdash 2n$.  
The dimension of $V_n$ and $W_n$ is given by the $n^{th}$ Catalan number
$\mathrm{Cat}(n) = \frac{1}{n+1} {2n \choose n}$.
The space $V_n$ is the module $S^{(n,n)}$ 
taken with respect to the {\em polytabloid basis} (otherwise known as 
{\em Young's natural representation}) 
and the space $W_n$ is the module 
$S^{(n,n)}$ taken with respect to the basis of {\em webs} (attached 
to the Lie algebra $\mathfrak{sl}_2$).

The $\symm_{2n}$-module $V_n$ is defined as follows.
Recall that an {\em $(n,n)$-tableau} is a filling of the 
$2 \times n$ Ferrers shape $(n,n)$ with the numbers $1, 2, \dots, 2n$.
If $T$ an $(n,n)$-tableau, the {\em (row) tabloid} $\{T\}$ is the set of all $(n!)^2$
tableaux obtainable from $T$ by permuting its entries within rows.  
Let $M^{(n,n)}$ be the $\CC$-vector space formally spanned by these tabloids:
\begin{equation}
M^{(n,n)} := \mathrm{span}_{\CC} \{ \{T \} \,:\, \text{$T$ an $(n,n)$-tableau} \}.
\end{equation}
The symmetric group $\symm_{2n}$ acts on $(n,n)$-tableaux by letter permutation;
this induces an action of $\symm_{2n}$ on $M^{(n,n)}$ given by
$\sigma. \{T\} := \{ \sigma.T \}$.  The module $M^{(n,n)}$ is isomorphic to the induction
of the trivial representation from $\symm_n \times \symm_n$ to $\symm_{2n}$.

Given any $(n,n)$-tableau $T$, let $C_T \subseteq \symm_{2n}$ be the 
subgroup of permutations in $\symm_{2n}$ which stabilize the columns of $T$. 
The {\em Young antisymmetrizer} is the group algebra element
\begin{equation}
\varepsilon_T := \sum_{\sigma \in C_T} \sign(\sigma) \cdot \sigma \in \CC[\symm_{2n}]
\end{equation}
and the associated {\em polytabloid} is
\begin{equation}
v_T := \varepsilon_T .\{T \} \in M^{(n,n)}.
\end{equation}
For any $\sigma \in \symm_{2n}$, we have $\sigma.v_T = v_{\sigma(T)}$, so  the linear
space 
\begin{equation}
V_n := \mathrm{span}_{\CC} \{ v_T \,:\, \text{$T$ an $(n,n)$-tableau} \} \subseteq
M^{(n,n)}
\end{equation}
spanned by these polytabloids carries the structure of a $\symm_{2n}$-module.
It turns out that $V_n$ is irreducible of shape $(n,n)$.

Recall that an $(n,n)$-tableau $T$ is {\em standard} if its entries increase going down
columns and across rows. Let $\SYT(n,n)$ denote the set of all standard $(n,n)$-tableaux;
we have $|\SYT(n,n)| = \mathrm{Cat}(n)$.   The five tableaux of $\SYT(3,3)$ are shown below.
\begin{equation*}
\begin{Young}
1 & 3 & 5 \cr
2 & 4 & 6 
\end{Young} \hspace{0.2in}
\begin{Young}
1 & 3 & 4 \cr
2 & 5 & 6 
\end{Young} \hspace{0.2in}
\begin{Young}
1 & 2 & 5 \cr
2 & 3 & 6 
\end{Young} \hspace{0.2in}
\begin{Young}
1 & 2 & 4 \cr
3 & 5 & 6 
\end{Young} \hspace{0.2in}
\begin{Young}
1 & 2 & 3 \cr
4 & 5 & 6 
\end{Young}
\end{equation*}
We let $T_0 \in \SYT(n,n)$ be the
tableau whose first row entries are $1, 3, 5, \dots$; when
$n = 3$ this is the leftmost tableau shown above.

It can be shown that the set $\{ v_T \,:\, T \in \SYT(n,n) \}$ of all standard polytabloids
forms a basis of $V_n$.
This basis, first discovered by Young \cite{Y} and studied by Specht \cite{S}, will be called the
{\em polytabloid basis}. 
\footnote{Russell and Tymoczko refer to this basis as the {\em Specht basis}. We avoid
this terminology to prevent confusion with other `Specht bases' in symmetric
group representation theory (see e.g. \cite{ATY}).}
Its definition extends readily to any partition $\lambda$,
yielding a basis for the correspding irreducible symmetric group module $S^{\lambda}$.
It is perhaps the most well-known basis of $S^{\lambda}$.

Whereas the module $V_n$ is defined in terms of tableaux, the module $W_n$
is defined in terms of matchings.  A {\em web} of order $n$ (attached to the Lie algebra
$\mathfrak{sl}_2$) is a perfect matching $M$ on the set $[2n] := \{1, 2, \dots, 2n \}$ which is
{\em noncrossing}: for any $a < b < c < d$, we do not simultaneously have
$a \sim c$ and $b \sim d$ in $M$. 
Let $\WWW_n$ be the set of all order $n$ webs.  The five webs of $\WWW_3$ are shown 
below.
\begin{center}
\includegraphics[scale = 0.17]{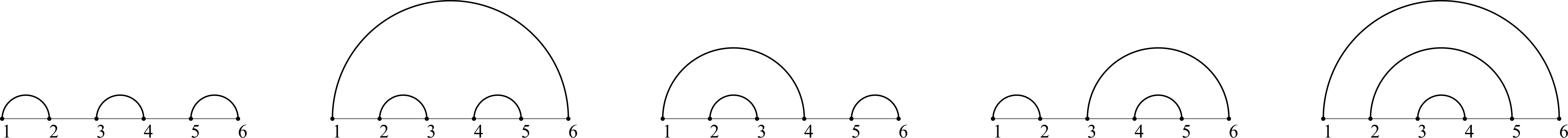}
\end{center}

Let
\begin{equation}
W_n := \mathrm{span}_{\CC} \{ w_M \,:\, \text{$M \in \WWW_n$} \}
\end{equation}
be the vector space with basis $\WWW_n$; we have
$\dim(W_n) = \mathrm{Cat}(n)$.
We let $M_0 \in \WWW_n$ 
be the web given by $2i-1 \sim 2i$ for all $1 \leq i \leq n$; when $n = 3$ this is the leftmost 
web shown above.

The symmetric group $\symm_{2n}$ acts on $W_n$ as follows.  Given $1 \leq i \leq 2n-1$,
 let $s_i = (i,i+1) \in \symm_{2n}$ be the corresponding adjacent transposition.
 If $M$ is an order $n$ web, the action of $s_i$ on $w_M$ is given by
 \begin{equation}
 \label{s-action-on-w}
 s_i.w_M = \begin{cases}
 - w_M & \text{if $i \sim i+1$ in $M$,} \\
 w_M + w_{M'} & \text{otherwise,}
 \end{cases}
 \end{equation}
 where in the second branch $M'$ is the web obtained from $M$ by replacing the pairs
 $a \sim i$ and $b \sim i+1$ with the pairs $a \sim b$ and $i \sim i+1$.
This action respects the Coxeter relations of $\symm_{2n}$, and 
 so endows $W_n$ with the structure of an $\symm_{2n}$-module (see \cite{PPR}).
 The module $W_n$ is irreducible of shape $(n,n)$ (see \cite{PPR}); the basis
 $\{ w_M \,:\, M \in \WWW_n \}$ is the {\em web basis} of $W_n$.
 The web basis is related to the invariant theory of the Lie group $\mathrm{SL}_2$.
 
 Since $V_n$ and $W_n$ are isomorphic and irreducible $\symm_{2n}$-modules,
 there exists an isomorphism $\varphi: V_n \rightarrow W_n$ which is uniquely determined
 up to a scalar. Russell and Tymoczko used combinatorial techniques to show that, after scaling,
 the map
 $\varphi$ sends $v_{T_0}$ to $w_{M_0}$ and relates the polytabloid basis to the web basis 
 in a unitriangular way.
 
 \begin{theorem} (Russell-Tymoczko \cite[Cor. 5.4, Thm. 5.5]{RT})
 \label{unitriangular}
 There exists a unique $\symm_{2n}$-module isomorphism 
 $\varphi: V_n \rightarrow W_n$ satisfying $\varphi(v_{T_0}) = w_{M_0}$. Furthermore,
 the transition matrix $(a_{TM})$ 
 between the polytabloid and web bases given by
 \begin{equation}
 \varphi(v_T) = \sum_{M \in \WWW_n} a_{TM} w_M \quad (T \in \SYT(n,n)),
 \end{equation}
 is unitriangular with respect to an appropriate order on the bases.
 \end{theorem}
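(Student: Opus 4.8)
The plan for the first assertion is Schur's Lemma plus the identification of two distinguished lines. Since $V_n$ and $W_n$ are irreducible of the same shape $(n,n)$ they are isomorphic, and an isomorphism is unique up to a nonzero scalar; so it is enough to exhibit one isomorphism carrying $\CC v_{T_0}$ to $\CC w_{M_0}$ and then rescale. The columns of $T_0$ are $\{1,2\},\{3,4\},\dots,\{2n-1,2n\}$, so $C_{T_0}=\langle s_1,s_3,\dots,s_{2n-1}\rangle\cong\symm_2^{\times n}$ and $s_{2i-1}.v_{T_0}=-v_{T_0}$; by \eqref{s-action-on-w} we also have $s_{2i-1}.w_{M_0}=-w_{M_0}$, since $2i-1\sim 2i$ in $M_0$. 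Thus $v_{T_0}$ and $w_{M_0}$ each span the subspace on which $\symm_2^{\times n}$ acts by the sign character, and a short Frobenius-reciprocity/Pieri count --- the only chain of shapes $\varnothing\subset\cdots\subset(n,n)$ built from vertical $2$-strips is $\varnothing\subset(1,1)\subset(2,2)\subset\cdots$ --- shows this subspace is one-dimensional in both $V_n$ and $W_n$. Any isomorphism matches these lines, giving the desired normalized $\varphi$.

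For the transition matrix I would use the classical bijection $\SYT(n,n)\to\WWW_n$, $T\mapsto M_T$: reading $1,2,\dots,2n$, call $i$ an \emph{opener} if it lies in the top row of $T$ and a \emph{closer} otherwise; standardness makes the resulting word a ballot sequence, and matching each closer to the most recent unmatched opener produces a noncrossing perfect matching $M_T$, with $M_{T_0}=M_0$. Order $\WWW_n$ (and hence $\SYT(n,n)$) by dominance of height functions: $M\preceq M'$ iff the lattice path $k\mapsto \#\{\text{openers}\le k\}-\#\{\text{closers}\le k\}$ of $M$ lies weakly below that of $M'$. Then $M_0$ is the unique $\preceq$-minimal web, and the goal is the expansion
\[
\varphi(v_T)=w_{M_T}+\sum_{M\prec M_T}a_{TM}\,w_M\qquad(T\in\SYT(n,n)),
\]
which, read against any linear extension of $\preceq$, is precisely unitriangularity.

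I would prove this by induction on $T$ along $\preceq$, the base case $T=T_0$ being the normalization above. For $T\ne T_0$ let $i$ be the first step at which the path of $M_T$ attains its global maximum (necessarily $\ge 2$, since $M_T\ne M_0$); then $i$ is an opener of $T$, $i+1$ a closer, and $T':=s_i.T$ is again standard with $M_{T'}\prec M_T$ --- the path of $M_{T'}$ agrees with that of $M_T$ except for a valley in place of a peak at step $i$. By equivariance $\varphi(v_T)=s_i.\varphi(v_{T'})$, and by induction $\varphi(v_{T'})=w_{M_{T'}}+\sum_{M\prec M_{T'}}a_{T'M}w_M$. Two facts about \eqref{s-action-on-w} then finish the job. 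First, the word of $M_{T'}$ reads ``closer, opener'' at positions $i,i+1$, so $i\not\sim i+1$ in $M_{T'}$, and a one-line stack computation shows the reconnection web of \eqref{s-action-on-w} is exactly $M_T$; hence $s_i.w_{M_{T'}}=w_{M_{T'}}+w_{M_T}$. Second, for every $M\prec M_{T'}$ the vector $s_i.w_M$ equals $\pm w_M$ (if $i\sim i+1$ in $M$) or $w_M+w_{M''}$, and in all cases every web that occurs is $\preceq M_T$ and $\ne M_T$, hence $\prec M_T$. Combining, $\varphi(v_T)=w_{M_T}+w_{M_{T'}}+(\text{terms }\prec M_T)$ with $M_{T'}\prec M_T$, which is the claimed expansion with leading coefficient $1$.

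The hard part is the second fact: controlling how the reconnection move of \eqref{s-action-on-w} perturbs the height function. Splitting into cases according to whether the partners of $i$ and $i+1$ in $M$ lie both to the left of $i$, both to the right of $i+1$, or one on each side --- the fourth combinatorial possibility being ruled out by noncrossingness --- each case shifts the path of $M$ by $\pm 2$ along an explicit interval, and one checks the shifted path still lies weakly below that of $M_T$ and cannot coincide with it; coincidence is impossible because the strict inequality $M\prec M_{T'}$ survives the shift, so that no spurious copy of $w_{M_T}$ is created and the leading coefficient stays $1$. This case check is where the real content lies; the rest is formal. (An alternative I would also try is to realize $W_n$ as a Temperley--Lieb cell module, so that the web basis becomes a Kazhdan--Lusztig-type basis and the polytabloid basis a standard-monomial-type basis and unitriangularity is automatic --- trading the case analysis for the problem of correctly matching the two combinatorial bases under that identification.)
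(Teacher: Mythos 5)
Your proposal is correct, but note that the paper contains no proof of Theorem~\ref{unitriangular}: it is quoted from Russell--Tymoczko \cite{RT} and used as a black box, so you have supplied an argument where the paper supplies only a citation. Both halves of your argument hold up. For the normalization, $s_{2i-1}\in C_{T_0}$ gives $s_{2i-1}.v_{T_0}=-v_{T_0}$, equation \eqref{s-action-on-w} gives $s_{2i-1}.w_{M_0}=-w_{M_0}$, and your Pieri count is right: a vertical $2$-strip added to a two-row shape must place one cell in each row, so the only chain is $\varnothing\subset(1,1)\subset\cdots\subset(n,n)$ and the $\sign^{\otimes n}$-isotypic line is one-dimensional in $S^{(n,n)}$; hence any isomorphism carries $\CC v_{T_0}$ to $\CC w_{M_0}$ and can be rescaled. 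For unitriangularity, the induction along the height-function order is complete: your choice of $i$ (first attainment of the global maximum, which is $\ge 2$ exactly when $M_T\neq M_0$, and which forces $i<2n$ and forces $i$, $i+1$ into distinct columns so that $s_i.T$ is standard) yields $M_{T'}\prec M_T$, and the two compressed steps are both verifiable --- the reconnection web of $M_{T'}$ at $i$ equals $M_T$ because a noncrossing matching is determined by its set of openers and the reconnection replaces opener $i+1$ by opener $i$, and in the three-case check the reconnection of $M\prec M_{T'}$ shifts the height function by $-2$ on $[a,i-1]$ or $[i+1,b-1]$ in the one-sided cases and by $+2$ at the single position $i$ in the straddling case, in each instance staying weakly below the path of $M_T$ and unable to coincide with it without forcing $M=M_{T'}$. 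This is close in spirit to Russell and Tymoczko's own published argument (an induction on a partial order of webs via the local moves \eqref{s-action-on-w}), so what your writeup buys is a self-contained account of the ingredient the present paper imports; combined with Lemma~\ref{positive-expansion} it would even make the sign analysis at the end of Section~\ref{Proof} immediate.
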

 
 Russell and Tymoczko conjectured 
 \cite[Conj. 5.8]{RT}
 that the entries $a_{TM}$ are nonnegative; we prove their
 conjecture here.
 
 \begin{theorem}
 \label{positive}
 The entries $a_{TM}$ of the transition matrix of Theorem~\ref{unitriangular}
 are nonnegative integers.
 \end{theorem}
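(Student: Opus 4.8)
The plan is to realize \emph{both} $V_n$ and $W_n$ concretely inside the tensor power $(\CC^2)^{\otimes 2n}$, observe that the isomorphism $\varphi$ of Theorem~\ref{unitriangular} then becomes the identity map, and thereby reduce the theorem to a positivity statement about planar resolution of crossings of perfect matchings. Let $V = \CC^2$ with basis $\{x,y\}$, and let $\symm_{2n}$ act on $V^{\otimes 2n}$ by permuting tensor positions. For an arbitrary matching $N$ of $[2n]$ set $w_N := \bigotimes_{\{a,b\}\in N,\ a<b}\omega_{ab}$, where $\omega_{ab} := x_a\otimes y_b - y_a\otimes x_b$ occupies positions $a$ and $b$; classically the $w_N$ span $\mathrm{Inv}_{\mathrm{SL}_2}(V^{\otimes 2n})$ and the subfamily $\{w_M : M\in\WWW_n\}$ is a basis. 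A direct check with the local relation $s_i = 1 - U_i$ on positions $i,i+1$ (where $U_i$ is the $\mathfrak{sl}_2$ cup--cap projector, acting as multiplication by $2$ on a factor $\omega_{i,i+1}$ and as $w_M\mapsto -w_{M'}$ otherwise, with $M'$ as in \eqref{s-action-on-w}) shows that $M\mapsto w_M$ intertwines the web action \eqref{s-action-on-w} with place permutation, so $W_n\cong\mathrm{Inv}_{\mathrm{SL}_2}(V^{\otimes 2n})$ as $\symm_{2n}$-modules.

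On the polytabloid side, sending a tabloid to the tensor carrying $x$ in its first-row positions and $y$ in its second-row positions identifies $M^{(n,n)}$ with the zero-weight space $V^{\otimes 2n}_0$; under this identification
$v_T = \varepsilon_T.\{T\} = \prod_j\big(1-(T_{1j}\,T_{2j})\big).\{T\}$
maps to $\bigotimes_j\omega_{T_{1j},T_{2j}} = w_{M(T)}$, where $M(T) := \{\{T_{1j},T_{2j}\}:1\le j\le n\}$ is the ``column matching'' of $T$ (here $T_{1j}<T_{2j}$ are the $j$-th column entries, so each arc is positively oriented), because the column antisymmetrizer turns each $x\otimes y$ in a column pair into $\omega$. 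Now $S^{(n,n)}$ occurs in $M^{(n,n)}$ with multiplicity one by Young's rule, so $M^{(n,n)}$ has a unique irreducible submodule $U$ of type $(n,n)$; both $V_n$ and the image of $W_n$ are irreducible of type $(n,n)$ and hence equal $U$, so $\varphi$ becomes a scalar on $U$, and since $v_{T_0}$ and $w_{M_0}$ are the \emph{same} tensor $\bigotimes_{k=1}^n\omega_{2k-1,2k}$, that scalar is $1$. Therefore $\varphi(v_T)=w_{M(T)}$ for all $T\in\SYT(n,n)$, and $a_{TM}$ is precisely the coefficient of the web $w_M$ in the expansion of the (generally crossing) matching tensor $w_{M(T)}$ in the web basis.

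It thus suffices to prove that \emph{for every} perfect matching $N$ of $[2n]$ the expansion $w_N = \sum_{M\in\WWW_n} c_{N,M}\, w_M$ has $c_{N,M}\in\mathbb{Z}_{\ge 0}$. I would do this by induction on the number of crossings of $N$. If $N$ is noncrossing it is already a web. Otherwise choose crossing arcs $\{a,c\},\{b,d\}$ with $a<b<c<d$ and apply the identity $\omega_{13}\omega_{24}=\omega_{12}\omega_{34}+\omega_{14}\omega_{23}$ of $\mathrm{Inv}_{\mathrm{SL}_2}(V^{\otimes 4})$ in the four positions $a,b,c,d$; this gives $w_N = w_{N'}+w_{N''}$ with both coefficients $+1$ and \emph{no closed loop created} (the four endpoints are distinct boundary points), where $N'$ and $N''$ reconnect those four points as $\{a,b\},\{c,d\}$ and as $\{a,d\},\{b,c\}$ respectively. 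An elementary interval-containment argument shows that neither reconnection introduces a crossing not already present in $N$, while both destroy the chosen crossing $\{a,c\}$--$\{b,d\}$, so $N'$ and $N''$ have strictly fewer crossings and the induction closes; nonnegativity and integrality are then immediate.

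The points needing genuine care are the sign and normalization bookkeeping making $M\mapsto w_M$ intertwine \eqref{s-action-on-w} \emph{exactly} (so that $\varphi$ is literally the identity and not a sign-twisted rescaling of the web basis), and, to a lesser extent, the interval-containment lemma of the last step; everything else is formal. The key insight is that the tensor model converts the Russell--Tymoczko conjecture into the transparent fact that a Kauffman-type resolution of crossings of a perfect matching creates no closed loops and uses only the coefficient $+1$.
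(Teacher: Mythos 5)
Your proof is correct, and its engine is the same as the paper's: you realize the web basis by products of the elementary $\mathfrak{sl}_2$-invariants $\omega_{ab}$ (the tensor avatar of the paper's minors $\Delta_{ab}$ of a $2\times 2n$ matrix, via $x_{1j},x_{2j}\leftrightarrow x,y$ in position $j$), verify that place permutation reproduces the action \eqref{s-action-on-w}, and prove positivity by resolving one crossing at a time with the identity $\omega_{ac}\omega_{bd}=\omega_{ab}\omega_{cd}+\omega_{ad}\omega_{bc}$, which is literally the paper's syzygy \eqref{syzygy} and its Lemma~\ref{positive-expansion}. The genuine difference is how you pin down $\varphi$ and kill the sign. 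The paper writes $\varphi(v_T)=\sigma.\Delta_{M_0}=\pm\Delta_{\sigma(M_0)}$ with $\sigma(T_0)=T$ and then disposes of the $\pm$ by appealing to the Russell--Tymoczko unitriangularity theorem (Theorem~\ref{unitriangular}) together with Lemma~\ref{positive-expansion}. You instead embed $M^{(n,n)}$ into the zero-weight space of $(\CC^2)^{\otimes 2n}$, observe that the column antisymmetrizer sends $v_T$ exactly to $w_{M(T)}$ with positively oriented arcs (standardness gives $T_{1j}<T_{2j}$), and use multiplicity one of $S^{(n,n)}$ in $M^{(n,n)}$ plus Schur's Lemma to conclude that $\varphi$ is the identity on the unique $(n,n)$-isotypic component; the sign never arises. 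Note that $\sigma(M_0)$ in the paper is precisely your column matching $M(T)$, so the two computations agree on the nose. What your route buys is self-containment: you use Russell--Tymoczko only for the normalization statement $\varphi(v_{T_0})=w_{M_0}$ (which your setup in fact re-derives), not for unitriangularity, whereas the paper's sign argument genuinely leans on their theorem. The two points you flag as needing care are real but routine: the intertwining check is the same computation as the paper's verification of \eqref{s-action-on-delta}, and the interval-containment claim that resolving a crossing never creates new crossings (so the crossing number strictly drops) is also implicitly asserted, without proof, in the paper's Lemma~\ref{positive-expansion}.
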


Given any partition $\lambda \vdash m$, the irreducible representation $S^{\lambda}$
of the symmetric group $\symm_m$ has a number of interesting bases, suggesting
possible generalizations of Theorems~\ref{unitriangular} and \ref{positive}.
The polytabloid basis $\{ v_T \,:\, T \in \SYT(\lambda) \}$ 
can be constructed as above, and so may be compared with various extensions of the 
web basis.
\begin{itemize}
\item  For any $\lambda \vdash m$, the {\em Kazhdan-Lusztig cellular basis} \cite{KL} is 
a basis of the $\symm_m$-irreducible $S^{\lambda}$.
It can be shown that the web and KL cellular bases coincide when $\lambda = (n,n)$
and $m = 2n$.
Garsia and McLarnan proved that the transition matrix between the 
polytabloid and KL
cellular bases is unitriangular, but that its entries can be negative for general
shapes $\lambda$ \cite{GM}.
Theorem~\ref{positive} shows that this transition matrix {\em does} have nonnegative
entries when $\lambda$ is a $2 \times n$ rectangle; it is an open problem to determine
the partitions $\lambda$ for which this matrix has nonnegative entries.
\item When $\lambda = (k, k, 1^{m-2k})$ is a {\em flag-shaped partition},
the {\em skein basis} of $S^{\lambda}$ has entries labeled by noncrossing set partitions
of $\{1, 2, \dots, m \}$ with $k$ total blocks and no singleton blocks.
This basis was introduced in \cite{R} to give algebraic proofs of cyclic sieving results
and coincides with the web and KL cellular bases when $m = 2k$ (but differs from the KL cellular
basis in general).  Again, the transition basis between the polytabloid and skein bases 
can have negative entries for general flag-shaped partitions $\lambda$.
\item  When $\lambda = (n,n,n)$ is a $3 \times n$ rectangle, 
there is a basis of $S^{(n,n,n)}$ indexed by order $n$ webs attached to the Lie algebra
$\mathfrak{sl}_3$.  These are graphs embedded in a disk with $3n$ boundary points
satisfying certain conditions; see \cite{PPR} for details on these webs, and how 
$\symm_{3n}$ acts on the vector space spanned by them.
This web basis is related to the invariant theory of $\mathrm{SL}_3$.
Computational evidence suggests that the transition matrix between the polytabloid and 
web bases of $S^{(n,n,n)}$ is unitriangular with nonnegative entries.
\end{itemize}

Our strategy for proving Theorem~\ref{positive} is to use an alternative model for $W_n$
in terms of products of matrix minors. The lack of such a 
simple model for the $\mathfrak{sl}_3$-web
basis of $S^{(n,n,n)}$ is the primary obstruction to extending our method from the two-row
to the three-row case.

\section{Proof of Theorem~\ref{positive}}
\label{Proof}

The main idea is to recast the vector space $W_n$ using the work of Kung and Rota
on binary forms \cite{KR}.  To this end, let $x = (x_{ij})_{1 \leq i \leq 2, 1 \leq j \leq 2n}$
be a $2 \times 2n$ matrix of variables. We work in the polynomial ring 
$\CC[x] := \CC[x_{ij} \,:\, 1 \leq i \leq 2, 1 \leq j \leq 2n]$ generated by these variables.
For any two-element subset 
$\{a < b\} \subseteq [2n]$, let 
\begin{equation}
\label{syzygy}
\Delta_{ab} := x_{1a} x_{2b} - x_{1b} x_{2a}
\end{equation}
be the maximal minor of $x$ with column set $\{a,b\}$.
These minors satisfy the following syzygy:
for $a < b < c < d$ we have
\begin{equation}
\Delta_{ac} \Delta_{bd} = \Delta_{ab} \Delta_{cd} + \Delta_{ad} \Delta_{bc}.
\end{equation}

If $M = \{I_1, I_2, \dots, I_n \}$ is any perfect matching on $[2n]$
(noncrossing or otherwise),
so that $I_1, I_2, \dots, I_n$ are $2$-element subsets of the column set of $x$,
we set
\begin{equation}
\Delta_M := \Delta_{I_1} \Delta_{I_2} \cdots \Delta_{I_n}.
\end{equation}

\begin{lemma}
\label{positive-expansion}
For any perfect matching $M$ on $[2n]$, there are nonnegative integers
$c_{MN}$ such that
$\Delta_M = \sum_{N \in \WWW_n} c_{MN} \Delta_{N}$.
\end{lemma}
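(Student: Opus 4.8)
\textbf{Proof proposal for Lemma~\ref{positive-expansion}.}

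The plan is to induct on the number of crossings of the matching $M$, using the syzygy as a straightening rule. If $M$ is already noncrossing, then $M \in \WWW_n$ and there is nothing to prove, so assume $M$ has at least one crossing, i.e.\ there exist $a < b < c < d$ with $a \sim c$ and $b \sim d$ in $M$. The matching $M$ then contains the factors $\Delta_{ac}\Delta_{bd}$, and applying the syzygy $\Delta_{ac}\Delta_{bd} = \Delta_{ab}\Delta_{cd} + \Delta_{ad}\Delta_{bc}$ expresses $\Delta_M$ as a sum $\Delta_{M'} + \Delta_{M''}$, where $M'$ is obtained from $M$ by replacing the pairs $a\sim c$, $b\sim d$ with $a \sim b$, $c \sim d$, and $M''$ by replacing them with $a \sim d$, $b \sim c$. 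Both $M'$ and $M''$ are again perfect matchings on $[2n]$, both substitutions produce manifestly nonnegative (indeed unit) coefficients, and by induction each of $\Delta_{M'}$, $\Delta_{M''}$ expands into the $\Delta_N$ ($N \in \WWW_n$) with nonnegative integer coefficients; summing gives the result for $M$.

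The one thing that genuinely needs to be checked — and which I expect to be the main obstacle — is that this process terminates, i.e.\ that some statistic strictly decreases under the replacement $M \rightsquigarrow M'$ and $M \rightsquigarrow M''$. The naive guess, "number of crossing pairs," can fail to decrease because although the particular crossing $\{a\sim c,\, b\sim d\}$ is resolved, the new chords $a\sim b$, $c\sim d$ (or $a\sim d$, $b\sim c$) may cross other chords of $M$ that the old chords did not. The standard fix is to weight the crossings: for a matching $M$, set $\mathrm{cr}(M) := \sum_{I \sim J \text{ crossing in } M} (\text{something measuring how ``spread out'' the pair } I,J \text{ is})$, for instance the sum over crossing pairs $\{a<c\}\sim\{b<d\}$ (with $a<b<c<d$) of $c-b$, or of the number of elements of $[2n]$ strictly between the two ``inner'' endpoints. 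One then verifies by a short case analysis — comparing, for a third chord $K$ of $M$, whether $K$ crosses $\{a,c\},\{b,d\}$ versus whether it crosses $\{a,b\},\{c,d\}$ — that this weighted count strictly drops. Equivalently, one may use the well-known fact (e.g.\ via the connection to the Temperley–Lieb / diagram algebra, or directly) that iterated application of the syzygy starting from any perfect matching always reaches a nonnegative combination of noncrossing matchings; the content of the lemma is precisely the termination and nonnegativity of this ``straightening'' algorithm.

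An alternative, more structural route avoids the termination bookkeeping entirely: the products $\{\Delta_M : M \in \WWW_n\}$ are known (by the first fundamental theorem for $\mathrm{SL}_2$, or by Kung--Rota \cite{KR}) to form a basis of the degree-$n$ multihomogeneous component $\CC[x]_{(1,1,\dots,1)}$ of $\CC[x]$ picking out each column exactly once — this is the classical fact that standard (here, noncrossing) bracket monomials span the bracket ring modulo syzygies. Granting that, every $\Delta_M$ has a \emph{unique} expansion $\Delta_M = \sum_{N\in\WWW_n} c_{MN}\Delta_N$, and it remains only to see the $c_{MN}$ are nonnegative integers; this again follows by the induction above once termination is known, or can be extracted from the explicit combinatorics of resolving crossings. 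Either way, the essential point is the syzygy together with a termination argument, and I would present the weighted-crossing induction as the cleanest self-contained proof.
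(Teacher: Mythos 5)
Your proof is the same induction the paper uses: resolve a crossing with the syzygy and argue that a crossing statistic decreases. The one substantive difference is that you claim the plain count of crossing pairs ``can fail to decrease'' and therefore introduce a weighted count; that claim is incorrect, and the paper's proof consists precisely of the observation that the unweighted count strictly drops. Here is why. Fix the crossing chords $\{a,c\}$ and $\{b,d\}$ with $a<b<c<d$, and let $K$ be any other chord of $M$. Its endpoints avoid $\{a,b,c,d\}$, and whether $K$ crosses a chord with both endpoints in $\{a,b,c,d\}$ depends only on which of the four regions $(a,b)$, $(b,c)$, $(c,d)$, and $[2n]\setminus[a,d]$ contain the two endpoints of $K$. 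Running through the ten possible placements of these two endpoints shows that the number of chords of the pair $\{\{a,b\},\{c,d\}\}$ (respectively $\{\{a,d\},\{b,c\}\}$) crossed by $K$ is never more than the number of chords of $\{\{a,c\},\{b,d\}\}$ crossed by $K$. Equality can occur --- one endpoint of $K$ in $(a,b)$ and one in $(c,d)$ gives $2=2$ for the first resolution, and one endpoint in $(b,c)$ with one outside $(a,d)$ gives $2=2$ for the second --- but the count never increases, and in particular a new chord never crosses a $K$ that neither old chord crossed. Since crossings among the remaining chords are untouched and the resolved pair loses its own mutual crossing, both $M'$ and $M''$ have strictly fewer crossing pairs than $M$.

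Consequently your weighted statistic is unnecessary, and the premise that motivated it is false. More importantly, as written your argument is incomplete: you explicitly identify termination as ``the one thing that genuinely needs to be checked'' and then defer it (``one then verifies by a short case analysis''), without carrying out the verification for your proposed weight $\sum (c-b)$. Until that check, or the simpler unweighted one above, is actually done, the proof has a gap exactly at the point you flagged. Your structural alternative via the fact that $\{\Delta_N : N\in\WWW_n\}$ is a basis gives uniqueness of the $c_{MN}$ but, as you concede, still requires the same induction for nonnegativity, so it does not sidestep the termination issue either.
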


\begin{proof}
Recall that $\WWW_n$ is the set of noncrossing perfect matchings on 
$[2n]$.  If the matching $M$ does not have any crossings, we are done.
Otherwise, there exist $a < b < c < d$ such that $a \sim c$ and $b \sim d$ in $M$;
this quadruple of indices is said to form a {\em crossing pair}.
Applying the syzygy relation \eqref{syzygy} gives
\begin{equation}
\Delta_M = \Delta_{M'} + \Delta_{M''},
\end{equation}
where the perfect matchings $M'$ and $M''$ are identical to $M$ except that 
($a \sim b$ and $c \sim d$) in $M'$ and ($a \sim d$ and $b \sim c$) in $M''$.
Since both $M'$ and $M''$ have strictly fewer total crossing pairs
than $M$, we are done by induction.
\end{proof}

The symmetric group $\symm_{2n}$ acts on the matrix $x$ of variables by column
permutation. For any $\sigma \in \symm_{2n}$ and any perfect matching
$M = \{I_1, I_2, \dots, I_n\}$, we have
\begin{equation}
\label{sigma-delta-action}
\sigma.\Delta_M = \pm \Delta_{\sigma(M)},
\end{equation}
where $\sigma(M) = \{\sigma(I_1), \sigma(I_2), \dots, \sigma(I_n)\}$; the sign
is determined by the number of pairs in $I_1, I_2, \dots, I_n$ which are inversions
of the permutation $\sigma$.
Equation~\eqref{sigma-delta-action} shows that the $\CC$-vector subspace
\begin{equation}
\label{module-one}
\mathrm{span}_{\CC} \{ \Delta_M \,:\, 
\text{$M$ a perfect matching on $[2n]$} \}
\end{equation}
of $\CC[x]$
carries the structure of a $\symm_{2n}$-module.

This module \eqref{module-one}
 is  $W_n$ in disguise. To see this, we apply Lemma~\ref{positive-expansion}
to get the equality of subspaces of $\CC[x]$:
\begin{equation}
\mathrm{span}_{\CC} \{ \Delta_M \,:\, 
\text{$M$ a perfect matching on $[2n]$} \} =
\mathrm{span}_{\CC} \{ \Delta_M \,:\, 
\text{$M \in \WWW_n$} \}. 
\end{equation}
If $M \in \WWW_n$ is a noncrossing perfect
matching on $[2n]$ and $1 \leq i \leq 2n-1$, 
the syzygy \eqref{syzygy} shows that
\begin{equation}
\label{s-action-on-delta}
s_i.\Delta_M = \begin{cases}
- \Delta_M & \text{if $i \sim i+1$ in $M$,} \\
\Delta_M + \Delta_{M'} & \text{otherwise,}
\end{cases}
\end{equation}
where in the second branch $M'$ is obtained from $M$ by replacing the pairs
$a \sim i$ and $b \sim i+1$ with the pairs $a \sim b$ and $i \sim i+1$.
Comparing Equations~\eqref{s-action-on-w}
and \eqref{s-action-on-delta}, we see that the linear map 
\begin{equation}
\psi: W_n \rightarrow 
\mathrm{span}_{\CC} \{ \Delta_M \,:\, \text{$M \in \WWW_n$} \}
\end{equation}
defined by $\psi(w_M) = \Delta_M$ for all $M \in \WWW_n$ is a surjective map of 
$\symm_{2n}$-modules. 
Since $W_n$ irreducible, the map $\psi$ is an isomorphism
which sends the web basis $\{ w_M \,:\, M \in \WWW_n \}$ to the 
minor product basis $\{ \Delta_M \,:\, M \in \WWW_n\}$.  We therefore identify 
$W_n
= \mathrm{span}_{\CC} \{ \Delta_M \,:\, \text{$M \in \WWW_n$} \}$ 
and 
$w_M = \Delta_M$.

We are ready to prove Theorem~\ref{positive}.  Let $\varphi: V_n \rightarrow W_n$ 
be the $\symm_{2n}$-module isomorphism of Theorem~\ref{unitriangular}
satisfying 
\begin{equation}
\label{first}
\varphi(v_{T_0}) = w_{M_0} = \Delta_{M_0}.
\end{equation}
Let $T \in \SYT(n,n)$ be an arbitrary standard tableau and let $\sigma \in \symm_{2n}$
be the permutation defined by $\sigma(T_0) = T$.  Then $\sigma(M_0)$ is a perfect
matching on $\{1, 2, \dots, 2n\}$ (which is not necessarily noncrossing).  Applying 
$\sigma$ to Equation~\eqref{first} gives
\begin{equation}
\label{second}
\varphi(v_T) = \varphi(v_{\sigma(T_0)}) =
\varphi(\sigma.v_{T_0}) = \sigma.\varphi(v_{T_0}) =
\sigma.\Delta_{M_0} = \pm \Delta_{\sigma(M_0)},
\end{equation}
where the second equality used the action of $\symm_{2n}$ on polytabloids
and the sign could {\em a priori} depend on $T$.

By Lemma~\ref{positive-expansion}, the polynomial
$\Delta_{\sigma(M_0)}$ appearing in Equation~\eqref{second}
expands nonnegatively and with integer coefficients into the web basis
$\{ \Delta_M \,:\, M \in \WWW_n \}$. 
Theorem~\ref{positive} will  follow
if we can show that the sign appearing in Equation~\eqref{second} is always positive.
But this follows from the unitriangularity result
Theorem~\ref{unitriangular} of Russell and Tymoczko \cite{RT} combined
with Lemma~\ref{positive-expansion}.

\section{Acknowledgments}
\label{Acknowledgments}

The author is grateful to Heather Russell and Julianna Tymoczko for helpful 
conversations. The author was partially supported by NSF Grant DMS-1500838.

\end{document}